\theoremstyle{plain}
\newtheorem{thm}{Theorem}
\newtheorem{lem}[thm]{Lemma}
\newtheorem{prop}[thm]{Proposition}
\newtheorem{cor}[thm]{Corollary}
\theoremstyle{definition}
\newtheorem{defn}[thm]{Definition}
\newtheorem{conj}[thm]{Conjecture}
\newtheorem{exmp}[thm]{Example}
\theoremstyle{remark}
\newtheorem{rem}[thm]{Remark}
\title{A formula on the weight distribution of linear codes with applications to AMDS codes}
\author{Alessio Meneghetti\inst{1}\and
Marco Pellegrini\inst{2} \and
Massimiliano Sala\inst{1}}
\institute{Department of Mathematics, University of Trento, Trento, Italy \and
Department of Mathematics and Computer Science, University of Florence, Florence, Italy
}
\authorrunning{Meneghetti, Pellegrini, Sala}
\titlerunning{A formula on the weight distribution}
\begin{document}
\maketitle

\begin{abstract}
The determination of the weight distribution of linear codes has been a fascinating problem since the very beginning of coding theory. There has been a lot of research on weight enumerators of special cases, such as self-dual codes and codes with small Singleton's defect. We propose a new set of linear relations that must be satisfied by the coefficients of the weight distribution.
From these relations we are able to derive known identities (in an easier way) for interesting cases, such as extremal codes, Hermitian codes, MDS and NMDS codes. Moreover, we are able to present for the first time the weight distribution of AMDS codes. We also discuss the link between our results and the Pless equations.
\par\medskip%
\noindent\textbf{AMS subject code classification:} 94B05 Linear codes, general\\
\noindent\textbf{Keywords:} weight distribution, linear codes, AMDS codes, extremal codes
\end{abstract}

\section{Introduction}
The weight distribution of a code $C$ is the vector $A(C)=(A_0,\ldots,A_n)$, where $A_i$ denotes the number of words
with weight $i$ and $n$ denotes the code length.
The determination of the weight distribution of linear codes has been a fascinating problem since the very beginning of coding theory (see \cite{macwilliams1977theory}). Apart from its intrinsic theoretical interest, the knowledge of the weight distribution allows for
the computation of important efficiency paramaters for a code, such as the Probability of Undetected Error (\cite{chang1980simple, wolf1982probability, klove1984probability}) and the Probability of Incorrect Decoding (\cite{huntoon1977computation,faldum2006error, frego2017probability}), at least over common models of channels. The weight distribution can be encoded as coefficients in a polynomial, called the weight enumerator. In this paper, we will write just "distribution" or "enumerator" for brevity.
Since the determination of the distribution implies the determination of the distance, in itself an NP-hard problem \cite{vardy1997intractability}, there is little hope for the existence of a polynomial-time algorithm to solve it for any linear code (starting from its parity-check matrix), and even more so for exact formulae (for the non-linear case see \cite{bellini2018deterministic}).
However, there has been a lot of research in two directions: on one hand, on the connection between the enumerators of related codes (code duality being the classical example, as in \cite{macwilliams1963theorem,pless1963power}), on the other hand, on the establishment of the enumerator of codes having special properties. As regards the latter, in the literature we mainly find results on either codes with symmetries (such as \cite{gleason1970weight, berlekamp1972gleason, macwilliams1972generalizations,mallows1973upper, pless1975classification, harada1997existence, nedeloaia2003weight}) or codes with a small Singleton's defect (\cite{de1996almost, faldum1997codes}). Probably, the most interesting approach of the first type is that of ultraspherical polynomials introduced in \cite{duursma2003extremal}, that led to fast computation of the distribution for extremal divisible codes, while the second produced exact formulae for the distribution of an MDS code (depending only on the code parameters) \cite[Ch. 11, \S3, Theorem 6]{macwilliams1977theory} and of an NMDS (depending also on one weight, e.g. the number of minimum-weight words) in \cite{dodunekov1995near}.
Some classes of codes (whose enumerator is still unknown) have received considerable attention, such as the Hermitian codes (\cite{duursma1999weight, barbero2000weight, marcolla2016small, marcolla2019minimum, marcolla2020hermitian}), where recently, in \cite{pellegrini2019weight}, it has been found out that the computation of their distributions may be significantly less hard than it appeared .

In Section \ref{sec: system} of this paper, we provide formulae (Proposition \ref{prop: number matrices}) that easily determine the complete weight distribution of a code starting from counting  
some special submatrices of $H$ (and viceversa). Although in general this task is as difficult as the original one,
this approach can be fruitful along two directions. The first is to investigate codes having special structure in their parity-check
matrices, the second is to derive a set of linear relations that must be satisfied by the coefficients of the enumerator (Proposition \ref{thm: system}). From these, we derive a further result on linear codes, Proposition \ref{cor: pascal}, where we
show the number of $A_i$’s which need to be known in order to compute (in polynomial time) their full
enumerator.\\
Thanks to Proposition \ref{thm: system}, in Section \ref{sec: amds} we are able to give new proofs for known results from literature regarding MDS codes and
NMDS codes. Moreover, we provide the formula for the weight distribution of AMDS codes (Proposition \ref{thm: formula amds}),
which has been an unsolved problem so far. \\
In Section \ref{sec: comparison}, we use Proposition \ref{thm: system} to deal with some deeply-studied codes such as Hermitian codes and extremal doubly-even self-dual binary codes. while providing a comparison with a special case of Pless’s equations. \\
In Section \ref{sec: pless}, we observe some links between the celebrated Pless equations (here reported as Theorem \ref{thm: pless1}) and our results.
In particular, there is a consequence of the Pless equations (here reported as Theorem \ref{thm: pless}) for which
it would have been possible to obtain formulae for the weight distributions of many families of codes, such as
those provided in this work. Indeed, we prove that, at least in some cases, Proposition \ref{thm: system}  and Theorem \ref{thm: pless} give equivalent relations.
In this section we also provide arguments supporting the convenience of using our results alongiside classical results.\\
Finally, in Section \ref{sec: conclusions} we draw our conclusions and sketch some future research directions.


\section{Preliminaries}\label{sec: preliminaries}
We consider the finite field $\mathbb{F}_q$, with $q$ a prime power $p^m$. An $[n,k]_q$ (linear) code $C$ is a vector subspace of $\left(\mathbb{F}_q\right)^n$, where $n$ is the length of $C$. As usual, we denote with $k$ the dimension of the code, i.e. the dimension of $C$ as vector subspace. The elements of $C$ are known as codewords. Throughout this paper we use the classical notation used in Coding Theory, so any codeword, and in general any vector, is considered as a row vector.
Let $\mathrm{d}(u,v)$ be the Hamming distance between two vectors and $\mathrm{w}(u)$ the Hamming weight of $u$. 
We denote with $d$ the minimum distance of $C$, i.e. the minimum among the Hamming distances between any two distinct codewords. Equivalently, $d$ is the minimum among the Hamming weights of the non-zero codewords of $C$. We say that an $[n,k]_q$ code with minimum distance $d$ is an $[n,k,d]_q$ code. It is well known that $d\leq n-k+1$ (the Singleton bound), and so we define the Singleton defect as $n-k+1-d$, which is always a non-negative integer. The weight distribution of $C$ is the sequence $\{A_i\}_{i=0,\ldots,n}$, where $A_i=\left|\left\{c\in C \;:\;\mathrm{w}(c)=i\right\}\right|$. We say that two $[n,k]_q$ codes are formally equivalent if they have the same weight distribution. 
The support $\mathrm{supp}(c)\subseteq\{1,\ldots,n\}$ is the set of indices of the non-zero coordinates of $c$, and it holds $\left|\mathrm{supp}(c)\right|=\mathrm{w}(c)$.
\\
A generator matrix $G$ of $C$ is a $k\times n$ matrix whose rows form a basis of $C$. The kernel of $G$ is a vector subspace of $\left(\mathbb{F}_q\right)^n$ with dimension $n-k$, hence it is a code itself. This code is known as the dual code of $C$, and we denote it as $C^\perp$. A generator matrix $H$ of $C^{\perp}$ is known as a parity-check matrix of $C$, and it holds that $G\cdot H^t=0$. We denote with $d^{\perp}$ the minimum distance of $C^{\perp}$. Next theorem is a classical result linking the weights of the codewords of $C$ to the parity-check matrix $H$.
\begin{thm}\cite[Theorem 1.4.13]{huffman2010fundamentals}
\label{thm: weights columns}
Let $C$ be a linear code with parity-check matrix $H$. If $c\in C$, the columns
of $H$ corresponding to the nonzero coordinates of $c$ are linearly dependent. Conversely,
if a linear dependence relation with nonzero coefficients exists among $w$ columns of $H$,
then there is a codeword in $C$ of weight $w$ whose nonzero coordinates correspond to these
columns.\end{thm}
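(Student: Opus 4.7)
The plan is to unpack the defining relation of the dual code and read off the claim directly. By definition of the parity-check matrix, $c \in C$ if and only if $H \cdot c^{t} = 0$. Denoting the columns of $H$ by $h_1,\ldots,h_n$, the matrix-vector product expands as
\[
H \cdot c^{t} \;=\; \sum_{i=1}^{n} c_i\, h_i,
\]
so membership in $C$ is equivalent to this linear combination of columns vanishing, with coefficients given by the entries of $c$.

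For the forward direction, assume $c \in C$ and let $S = \mathrm{supp}(c)$. Since $c_i = 0$ for $i \notin S$, the identity above reduces to $\sum_{i \in S} c_i\, h_i = 0$, and by definition of the support the coefficients $c_i$ for $i \in S$ are all nonzero. This exhibits a nontrivial linear dependence among the columns of $H$ indexed by $S$, that is, among the columns corresponding to the nonzero coordinates of $c$.

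For the converse, suppose $I \subseteq \{1,\ldots,n\}$ with $|I| = w$ and that nonzero scalars $\lambda_i \in \mathbb{F}_q \setminus \{0\}$ satisfy $\sum_{i \in I} \lambda_i\, h_i = 0$. Define a vector $c \in (\mathbb{F}_q)^n$ by $c_i = \lambda_i$ for $i \in I$ and $c_i = 0$ for $i \notin I$. Then $\mathrm{supp}(c) = I$, hence $\mathrm{w}(c) = w$, and the expansion above yields $H \cdot c^{t} = \sum_{i \in I} \lambda_i\, h_i = 0$, so $c \in C$. By construction, the nonzero coordinates of $c$ are precisely the indices in $I$.

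There is no real obstacle here: the result is simply a reformulation of the identity $H c^{t} = 0$ in terms of column dependencies. The only bookkeeping point is to be careful that "nonzero coefficients" in the dependence match "nonzero coordinates" of the codeword in both directions, which is immediate from the correspondence $c_i \leftrightarrow \lambda_i$ on the support.
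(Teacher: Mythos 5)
Your proof is correct and is exactly the standard argument: the paper itself does not prove this statement but cites it directly from Huffman--Pless (Theorem 1.4.13), where the same reformulation of $Hc^{t}=0$ as a column dependence on $\mathrm{supp}(c)$ is used. The only (harmless) pedantic point is that the forward direction implicitly assumes $c\neq 0$, since for the zero codeword the relevant set of columns is empty; this caveat is inherited from the statement itself, not introduced by your argument.
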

In particular, if $c\in C$ then the columns of $H$ identified by $\mathrm{supp}(c)$ are linearly dependent, and if all $(n-k)\times \delta$ submatrices of $H$ have rank equal to $\delta\leq n-k$,  then $d> \delta$.


\section{A linear system for the weight distribution computation}
\label{sec: system}
In this section  and in the rest of the paper we adopt the notation introduced in Section \ref{sec: preliminaries}. 
\begin{defn}
\label{def: N}
Let $M$ be an $s\times t$ matrix with entries in a field, and let $\nu$ be an integer such that $1 \leq \nu \leq t$.
\begin{itemize}
\item We define $N_M(\nu, r)$ as the number of $s \times \nu$ submatrices of $M$ of rank $r$.
\item For any subset $\mathcal{I}\subseteq \{1,\ldots, t\}$ with size $\nu$, $\mathcal{I}=\{i_1,\ldots, i_{\nu}\}$ with $i_1<i_2<\ldots<i_{\nu}$,  we define $M_{[\mathcal{I}]}$ as the $s\times \nu$ submatrix of $M$ identified by the column indices $\mathcal{I}$.  
\end{itemize}
\end{defn}
\begin{prop}\label{prop: number matrices}
Let $C$ be a linear code of length $n$.
Let $\nu$ be an integer
such that $1 \leq \nu \leq n$ and let $H$ be a parity-check matrix of  $C$. Then
\begin{equation}\label{eq: big formula}
\sum_{s=0}^{\nu}\binom{n-s}{\nu-s}A_s= \sum_{r=0}^{\nu} N_H(\nu,r)q^{\nu-r}\;.
\end{equation}

\end{prop}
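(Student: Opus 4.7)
The plan is to prove the identity by a double-counting argument applied to the set
\[
\mathcal{P} = \bigl\{ (c, \mathcal{I}) : c \in C,\ \mathcal{I} \subseteq \{1,\ldots,n\},\ |\mathcal{I}| = \nu,\ \mathrm{supp}(c) \subseteq \mathcal{I} \bigr\}.
\]
Counting $|\mathcal{P}|$ in two different ways will yield exactly the two sides of \eqref{eq: big formula}.

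For the left-hand side, I would fix a codeword $c \in C$ first and count the admissible $\mathcal{I}$. If $\mathrm{w}(c) = s$, then $\mathrm{supp}(c)$ is an $s$-subset of $\{1,\ldots,n\}$, and the number of $\nu$-subsets of $\{1,\ldots,n\}$ that contain it equals $\binom{n-s}{\nu-s}$. Summing over all codewords and grouping them by weight yields $\sum_{s=0}^{\nu} \binom{n-s}{\nu-s} A_s$. (Terms with $s > \nu$ vanish since $\binom{n-s}{\nu-s} = 0$, so the truncation of the sum at $\nu$ is harmless.)

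For the right-hand side, I would fix $\mathcal{I}$ first and count the codewords $c$ with $\mathrm{supp}(c) \subseteq \mathcal{I}$. Such codewords are exactly those whose restriction to the coordinates outside $\mathcal{I}$ is zero; equivalently, they are in bijection (via $c \mapsto c_{|\mathcal{I}}$) with vectors $v \in \mathbb{F}_q^\nu$ satisfying $H_{[\mathcal{I}]} v^t = 0$. This uses the interpretation of Theorem \ref{thm: weights columns}: the equation $H c^t = 0$ restricted to the nonzero positions of $c$ reads $H_{[\mathcal{I}]} (c_{|\mathcal{I}})^t = 0$. Hence the number of such codewords equals $|\ker H_{[\mathcal{I}]}| = q^{\nu - \mathrm{rank}(H_{[\mathcal{I}]})}$. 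Grouping the $\binom{n}{\nu}$ subsets $\mathcal{I}$ according to the rank $r$ of $H_{[\mathcal{I}]}$, and recalling Definition \ref{def: N}, gives
\[
|\mathcal{P}| = \sum_{\mathcal{I}} q^{\nu - \mathrm{rank}(H_{[\mathcal{I}]})} = \sum_{r=0}^{\nu} N_H(\nu, r)\, q^{\nu - r}.
\]

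Equating the two expressions for $|\mathcal{P}|$ establishes the identity. There is no real obstacle here: the only delicate point is the bijection between codewords supported in $\mathcal{I}$ and $\ker H_{[\mathcal{I}]}$, which must be argued carefully from the parity-check relation, but this follows directly from the converse statement in Theorem \ref{thm: weights columns}. The summation range $r \in \{0,\ldots,\nu\}$ on the right is automatic since $H_{[\mathcal{I}]}$ has $\nu$ columns, and on the left the effective range $s \in \{0,\ldots,\nu\}$ is enforced by the binomial coefficient.
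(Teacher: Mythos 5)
Your proof is correct and is essentially the paper's own argument: your incidence set $\mathcal{P}$ of pairs $(c,\mathcal{I})$ with $\mathrm{supp}(c)\subseteq\mathcal{I}$ corresponds bijectively to the paper's disjoint union $\bigsqcup_{|\mathcal{I}|=\nu}V_{[\mathcal{I}]}$ of kernels of the submatrices $H_{[\mathcal{I}]}$, and your two counts (fixing $c$ first, fixing $\mathcal{I}$ first) are exactly the paper's fiber count and rank count. The only difference is presentational, and your careful justification of the bijection between $\ker H_{[\mathcal{I}]}$ and codewords supported in $\mathcal{I}$ is exactly the point the paper handles via its map $\varphi_\nu$.
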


\begin{proof}
Suppose $r\leq \nu$ is the rank  of $H_{[\mathcal{I}]}$ and $V_{[\mathcal{I}]}$ its kernel. Then, any element $v=(v_1,\ldots,v_{\nu})\in V_{[\mathcal{I}]}$ can be mapped into a codeword of weight less or equal to $\nu$, as follows. Consider the map  
\begin{equation}\nonumber
\varphi_{[\mathcal{I}]}:V_{[\mathcal{I}]}\to\left(\mathbb{F}_q\right)^n,
\qquad
\varphi_{[\mathcal{I}]}(v)=(\bar{v}_1,\ldots,\bar{v}_n)\;,
\end{equation}
such that $\bar{v}_{i_j}=v_j$ for any $j\in\{1,\ldots,\nu\}$, i.e. $\bar{v}_{[\mathcal{I}]}=v$, and $\bar{v}_{l}=0$ if $l\notin\mathcal{I}$.

We call $\varphi_{\nu}$ the map
\begin{equation}\nonumber
\varphi_{\nu}: \; \bigsqcup_{\mathcal{I}:\;|\mathcal{I}|=\nu}V_{[\mathcal{I}]}\;\longrightarrow \; C\;,
\end{equation}
such that $\varphi_{[\mathcal{I}]}$ is the restriction of $\varphi_{\nu}$ to $V_{[\mathcal{I}]}$.\\
We count the elements belonging to the domain of $\varphi_{\nu}$ in two distinct ways:
\begin{itemize}
\item If $H_{[\mathcal{I}]}$ has rank $r$, then $\left|V_{[\mathcal{I}]}\right|=q^{\nu-r}$. Hence, using Definition \ref{def: N}, we have
\begin{equation}\label{eq: proof relation 1}
\left|
\bigsqcup_{\mathcal{I}:\;|\mathcal{I}|=\nu}V_{[\mathcal{I}]}
\right|
=
\sum_{r=0}^{\nu} N_H(\nu,r)q^{\nu-r}\;.
\end{equation}
\item We consider now a codeword $c\in C$ with weight $s\leq \nu$.  Let $\mathcal{I}_1
=\mathrm{supp}(c)$. Any choice of $\nu-s$ indices $\mathcal{I}_2
\subset \{1,\ldots,n\}\smallsetminus
\mathcal{I}_1$ identifies uniquely an element in $\bigsqcup_{\mathcal{I}:\;|\mathcal{I}|=\nu}V_{[\mathcal{I}]}$. More precisely, $\mathcal{I}_1\cup \mathcal{I}_2$ determines uniquely $H_{[\mathcal{I}_1\cup \mathcal{I}_2]}$, clearly $c_{[\mathcal{I}_1\cup \mathcal{I}_2]}\in V_{[\mathcal{I}_1\cup \mathcal{I}_2]}$,  and so there is a unique element $v$ which belongs to $V_{[\mathcal{I}_1\cup \mathcal{I}_2]}$ such that $\varphi_{[\mathcal{I}_1\cup \mathcal{I}_2]}(v)=c$, that is, $v=c_{[\mathcal{I}_1\cup \mathcal{I}_2]}$. To determine the size of $\varphi^{-1}_{\nu}(c)$, the fiber of $c$ under the map $\varphi_{\nu}$, it is therefore enough to count all possible subsets of $ \{1,\ldots,n\}\smallsetminus
\mathcal{I}_1$
 with size $\nu-s$. It follows that the fiber of each codeword of weight $s$ has $\binom{n-\nu}{\nu-s}$ elements, and we observe that all together the fibers of such codewords form a partition of $\bigsqcup_{\mathcal{I}:\;|\mathcal{I}|=\nu}V_{[\mathcal{I}]}$. Since there exist $A_s$ codewords of weight $s$, we obtain
\begin{equation}\label{eq: proof relation 2}
\left|
\bigsqcup_{\mathcal{I}:\;|\mathcal{I}|=\nu}V_{[\mathcal{I}]}
\right|
=
\sum_{s=0}^{\nu} \binom{n-\nu}{\nu-s}A_s\;.
\end{equation}
\end{itemize}
Putting together \eqref{eq: proof relation 1} and \eqref{eq: proof relation 2}, we obtain \eqref{eq: big formula}.
\end{proof}

\newpage
\begin{lem}\label{lem: large w}
If $n - d^{\perp} < \nu \leq n$, then all the $(n - k) \times \nu$ submatrices of $H$ have
rank $n - k$.
\end{lem}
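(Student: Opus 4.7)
The plan is to argue by contradiction, exploiting the fact that $H$ is not merely a parity-check matrix of $C$ but also a generator matrix of the dual code $C^\perp$. Since the rows of $H$ form a basis of $C^\perp$, any non-trivial linear combination of them produces a non-zero codeword of $C^\perp$, whose weight must therefore be at least $d^\perp$. This observation is the pivot of the proof.

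First I would fix an index set $\mathcal{I}\subseteq\{1,\ldots,n\}$ with $|\mathcal{I}|=\nu$ and consider the submatrix $H_{[\mathcal{I}]}$, which has $n-k$ rows and $\nu$ columns. Assume for contradiction that $\mathrm{rank}\,H_{[\mathcal{I}]}<n-k$. Then the rows of $H_{[\mathcal{I}]}$ are linearly dependent, so there exists a non-zero vector $\lambda\in(\mathbb{F}_q)^{n-k}$ such that $\lambda\cdot H_{[\mathcal{I}]}=0$.

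Now I would lift this to the full matrix: the word $c^\perp:=\lambda\cdot H$ is a non-zero element of $C^\perp$ (non-zero because the rows of $H$ are linearly independent, being a basis of $C^\perp$), and its coordinates indexed by $\mathcal{I}$ are precisely the entries of $\lambda\cdot H_{[\mathcal{I}]}$, hence all zero. Consequently $\mathrm{supp}(c^\perp)\subseteq\{1,\ldots,n\}\smallsetminus\mathcal{I}$, so
\[
\mathrm{w}(c^\perp)\leq n-\nu.
\]
Since by hypothesis $\nu>n-d^\perp$, we get $\mathrm{w}(c^\perp)<d^\perp$, contradicting the definition of $d^\perp$ as the minimum weight of non-zero codewords in $C^\perp$. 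Thus no such $\mathcal{I}$ exists, and every $(n-k)\times\nu$ submatrix of $H$ has rank $n-k$.

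I do not foresee a genuine obstacle here: the argument is a textbook translation between ``rank deficiency of a column-selected submatrix'' and ``existence of a low-weight dual codeword'', which is essentially Theorem \ref{thm: weights columns} applied to $C^\perp$ instead of $C$. The only point requiring a line of care is to keep track of the fact that $H$, viewed as a generator matrix of $C^\perp$, has linearly independent rows, so that $\lambda\neq 0$ indeed yields $c^\perp\neq 0$.
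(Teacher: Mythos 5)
Your proof is correct and follows essentially the same route as the paper: assume a rank-deficient $(n-k)\times\nu$ submatrix, take a non-zero row relation $\lambda$, and observe that $\lambda\cdot H$ is a dual codeword of weight at most $n-\nu<d^{\perp}$, a contradiction. Your explicit remark that $\lambda\cdot H\neq 0$ because the rows of $H$ are independent is a small point the paper leaves implicit, but the argument is the same.
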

\begin{proof}
Let $d^{\perp} > n - \nu$. If we have an $(n - k) \times \nu$ submatrix $H_{[\mathcal{I}]}$ of $H$ with rank $r < n - k$, then its $n - k$ rows are dependent. By a reordering of its columns, we can suppose that $\mathcal{I}=\{1,\ldots,\nu\}$.
We observe that there is a non-zero vector $v\in \left(\mathbb{F}_q\right)^{n-k}$ such that $v\cdot H_{[\mathcal{I}]}=0$, hence the weight of $v\cdot H$ is at most $n-\nu$. Since $v\cdot H$ is a codeword of $C^{\perp}$ and $\mathrm{w}(v\cdot H)\leq n-\nu$, we have $d^{\perp}\leq n-\nu$, which is a contradiction to our hypothesis $d^{\perp} > n - \nu$.
\end{proof}

\begin{prop}\label{thm: system}
Let $\left\{A_i\right\}$ be the weight distribution of $C$. Let $\nu$ be an integer such that $n-d^{\perp}<\nu\leq n$. Then
\begin{equation}\label{eq: thm system}
\sum_{s=0}^\nu
\binom{n-s}{\nu-s}A_s
 =
\binom{n}{\nu}q^{\nu+k-n}\;.
\end{equation}
\end{prop}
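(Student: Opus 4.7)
The plan is to combine Proposition~\ref{prop: number matrices} with Lemma~\ref{lem: large w}, both of which are already available. The identity we want is essentially what one gets from the general counting formula once the rank of every $\nu$-column submatrix of $H$ is forced to take a single value.

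First I would invoke Proposition~\ref{prop: number matrices} directly for the given $\nu$ to rewrite the left-hand side of \eqref{eq: thm system} as
\begin{equation}\nonumber
\sum_{s=0}^{\nu}\binom{n-s}{\nu-s}A_s \;=\; \sum_{r=0}^{\nu} N_H(\nu,r)\,q^{\nu-r}.
\end{equation}
The next step is to use the hypothesis $n-d^{\perp}<\nu$ to simplify the right-hand side. By Lemma~\ref{lem: large w}, under this hypothesis every $(n-k)\times \nu$ submatrix of $H$ has rank exactly $n-k$; in particular the rank cannot drop below $n-k$, and of course it cannot exceed $n-k$ either. Hence $N_H(\nu,r)=0$ for every $r\neq n-k$, and the sum over $r$ collapses to the single term $r=n-k$.

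It remains to evaluate $N_H(\nu,n-k)$. Since, under our hypothesis, \emph{every} choice of $\nu$ columns from the $n$ columns of $H$ gives a submatrix of rank $n-k$, this count equals the total number of such choices, namely $\binom{n}{\nu}$. Substituting gives
\begin{equation}\nonumber
\sum_{s=0}^{\nu}\binom{n-s}{\nu-s}A_s \;=\; \binom{n}{\nu}\, q^{\nu-(n-k)} \;=\; \binom{n}{\nu}\, q^{\nu+k-n},
\end{equation}
which is exactly \eqref{eq: thm system}.

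I do not foresee a genuine obstacle: the proposition is really a direct corollary of the two preceding statements, the only subtlety being to check that the hypothesis $\nu>n-d^{\perp}$ is compatible with $\nu\ge n-k$ (so that rank $n-k$ is actually attainable). This is automatic, since the Singleton bound applied to $C^{\perp}$ gives $d^{\perp}\le k+1$, hence $\nu>n-d^{\perp}\ge n-k-1$, i.e.\ $\nu\ge n-k$, confirming that Lemma~\ref{lem: large w} applies meaningfully.
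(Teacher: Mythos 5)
Your proposal is correct and follows essentially the same route as the paper: apply Lemma~\ref{lem: large w} to conclude $N_H(\nu,r)=0$ for $r\neq n-k$ and $N_H(\nu,n-k)=\binom{n}{\nu}$, then substitute into Equation~\eqref{eq: big formula} of Proposition~\ref{prop: number matrices}. The extra remark that $\nu\ge n-k$ (via the Singleton bound on $C^{\perp}$) is a harmless additional sanity check not needed in the paper's argument.
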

\begin{proof}
Lemma \ref{lem: large w} implies that 
\begin{itemize}
\item $N_H(\nu,r)=0$ for $\nu >n-d^{\perp}$ and $r\neq n-k$, and
\item $N_H(\nu,r)=\binom{n}{\nu}$ otherwise.
\end{itemize}
Then, we can substitute these values into Equation \eqref{eq: big formula}, proving our claim.
\end{proof}
\begin{prop}\label{cor: pascal}
Let $\sigma$ be the sum of the Singleton defects of $C$ and $C^{\perp}$. 
The knowledge of $\sigma+d-1$ elements of the weight distribution $\{A_0,\ldots,A_n\}$ is enough to compute the full weight distribution of $C$ and $C^{\perp}$.
This computation can be achieved by a direct application of \eqref{eq: thm system}.
\\
In particular, the knowledge of $d$ and of any $\sigma-1$ elements of $\{A_d,\ldots,A_n\}$ is enough to compute the entire weight distribution of $C$ and $C^{\perp}$.
\end{prop}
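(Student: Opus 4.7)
The plan is to combine the $d^\perp$ linear relations from Proposition \ref{thm: system} with the trivial information $A_0=1$ and $A_1=\dots=A_{d-1}=0$ (implied by knowing $d$), and then to show that the resulting system projects bijectively onto any $\sigma-1$ coordinates of $\{A_d,\ldots,A_n\}$. Recalling that $\sigma=n+2-d-d^\perp$, the unknowns among $\{A_d,\ldots,A_n\}$ form a set of $n-d+1=d^\perp+\sigma-1$ variables, while Proposition \ref{thm: system} supplies exactly one linear equation for each integer $\nu$ with $n-d^\perp<\nu\leq n$, totalling $d^\perp$ equations; the expected dimension of the affine solution set is therefore $\sigma-1$.

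Next, I would analyse the coefficient matrix of this system: the coefficient of $A_s$ in the equation indexed by $\nu$ is $\binom{n-s}{\nu-s}$. Substituting $m=n-\nu$ and $t=n-s$, the entries become $\binom{t}{m}$, with rows indexed by $m\in\{0,\ldots,d^\perp-1\}$ and columns by $t\in\{0,\ldots,n-d\}$. The key lemma to establish is that every $d^\perp\times d^\perp$ submatrix of this binomial matrix is non-singular. Since $\binom{t}{m}$ is a polynomial in $t$ of degree exactly $m$ with leading coefficient $1/m!$, elementary row operations reduce such a submatrix (with column indices $t_1<\dots<t_{d^\perp}$) to $(t_j^m/m!)_{m,j}$, a rescaled Vandermonde whose determinant equals $\bigl(\prod_{m=0}^{d^\perp-1}m!\bigr)^{-1}\prod_{i<j}(t_j-t_i)\neq 0$.

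Granted this lemma, the conclusion follows quickly: fixing any $\sigma-1$ of the variables $A_s$ with $s\in\{d,\ldots,n\}$ leaves a square linear system of $d^\perp$ equations in $d^\perp$ unknowns whose coefficient matrix is one of the submatrices above, hence invertible, and the remaining unknowns are determined uniquely. Together with $A_0=1$ and $A_1=\dots=A_{d-1}=0$ this recovers the full weight distribution of $C$ from $d+(\sigma-1)=\sigma+d-1$ specified elements, and the weight distribution of $C^\perp$ then follows from the MacWilliams identities. The main obstacle is the Vandermonde-type computation establishing the key lemma: a generic dimension count would only yield that $\sigma-1$ \emph{specific} $A_s$ suffice, whereas the invertibility of every $d^\perp\times d^\perp$ submatrix is precisely what upgrades this to \emph{any} choice of $\sigma-1$ elements.
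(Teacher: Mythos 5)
Your proposal is correct and follows essentially the same route as the paper: you set up the square linear system given by Proposition \ref{thm: system} for $\nu\in\{n-d^{\perp}+1,\ldots,n\}$, use $A_0=1$ and $A_1=\cdots=A_{d-1}=0$, and reduce everything to the non-vanishing of the column-selected maximal minors of the truncated Pascal matrix $\bigl[\binom{n-s}{\nu-s}\bigr]$. The only difference is that the paper cites Kersey's invertibility result for truncated Pascal matrices at this step, whereas you prove the needed non-singularity directly via the degree/leading-coefficient reduction to a Vandermonde determinant, which is a valid self-contained substitute.
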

\begin{proof}
The linear system obtained by considering Equation \eqref{eq: thm system} for each $\nu$ in the range $\{n-d^{\perp}+1,\ldots, n\}$ can be written as 
\begin{equation}\label{eq: pascal}
\mathcal{P}_{d^{\perp},n}\cdot A(C)=b_{d^{\perp}} ,
\end{equation}
where
\begin{itemize}
\item $
\mathcal{P}_{r,t}=\begin{bmatrix}\binom{t-j}{i}\end{bmatrix}_{i=0,\ldots,r-1, \;j=0,\ldots,t}
$, 
with $\binom{t-j}{i}=0$ whenever $i+j> t$,
\item $A(C)=(A_0,\ldots, A_n)^t$ is a column vector of length $n+1$ containing the weight distribution of $C$, and
\item $b_{r}=\left( \binom{n}{0}q^{k},\binom{n}{1}q^{k-1}, \ldots, \binom{n}{r-1}q^{k-r+1} \right)^{\mathrm{T}}$.
\end{itemize}
Observe that $\mathcal{P}_{r,t}$ is a truncated Pascal matrix with $r\ge 1$ rows and $t+1\ge r$ columns. Hence, as proved in \cite{kersey2016invertibility}, all its minors of order $r$ are non-zero. This implies that by knowing at least $n-d^{\perp}+1$ values of the weight distribution and substituting them into \eqref{eq: pascal} we obtain a linear system with $d^{\perp}$ equations in $d^{\perp}$ unknowns which admits a unique solution. Furthermore, the knowledge of $d$ implies the knowledge of $A_0,\ldots,A_{d-1}$, hence it is enough to know other $n-d^{\perp}+1-d=\sigma-1$ $A_i$'s.
\end{proof}
The previous proposition is optimal, in the following sense.
Let $\mathrm{C}$ be the set of all linear codes with non-trivial parameters, that is, $q\geq 2$, $n>k>0$.
We define the function $h: \mathrm{C} \to \mathbb{N}$, where for any $C \in \mathrm{C}$ the value $h(C)$ is the smallest number such that if any $h(C)$ elements of
the weight distribution of $C$ are fixed, say $A_{i_1},\ldots,A_{i_h}$, then the rest of the weight distribution can be deterministically deduced with the sole knowledge of both the code parameters ($n,k,d,d^{\perp}$)
and the weights $A_{i_1},\ldots,A_{i_h}$.
We now consider all functions $\zeta_s:  \mathrm{C}  \to \mathbb{N}$ such that:
\begin{itemize}
\item for any code $C \in \mathrm{C}$, $h(C) \leq \zeta_s(C)$; 
\item there is an integer $s$ for which $\zeta_s(C)=n-(d+d^{\perp})+s$.
\end{itemize}
The set formed by all such $\zeta_s$ functions is non-empty, since for example our previous proposition gives $\zeta_1$.\\
We now claim that there is no better $\zeta_s$. In other words, there is no $s\leq 0$ for which $\zeta_s$ exists.
To show this, let us consider two near-MDS codes with the same $q,n,k,d,d^{\perp}$. If such a $\zeta_s$ exists, then only $n-(d+d^{\perp})=0$ weights would be necessary to determine the others (instead of $\sigma-1=n-(d+d^{\perp})+1$).
Which means that the two near-MDS codes would be formally equivalent. Generally speaking this is false, as shown by taking the two codes in the following example.
\begin{exmp}
Let $C_1$ and $C_2$ be the $[8, 4, 4]$ codes over $\mathbb{F}_4=\{0,1,\alpha,\alpha^2\}$ generated respectively by $G_1$ and $G_2$ with
$$
G_1=\begin{bmatrix}
1&   0&   0&   0&   1& \alpha^2&   \alpha&   0\\
0&   1&   0&   0&   0&   1& \alpha^2&   \alpha\\
0&   0&   1&   0&   \alpha&   \alpha&   0&   1\\
0 &  0&   0&   1&   1& \alpha^2&   1&   1
\end{bmatrix}\;,
\qquad
G_2=\begin{bmatrix}
  1&   0&   0&   0&   1& \alpha^2&   \alpha&   0\\
  0&   1&   0&   0& \alpha^2&   0&   1&   1\\
  0&   0&   1&   0&   0&   1& \alpha^2& \alpha^2\\
  0&   0&   0&   1&   \alpha&   \alpha&   0&   1
\end{bmatrix}\;.
$$
The weight distributions of $C_1$ and $C_2$ are respectively
$$
\{1,0,0,0,27,60,78,60,30\}
\quad\mathrm{and}\quad
\{1,0,0,0,30,48,96,48,33\}\;.
$$
\end{exmp}
We actually have a much stronger conjecture, as follows. 
\begin{conj}\label{conj: optimality}
Let us consider any function $\zeta: \mathrm{C}  \to \mathbb{N}$ such that for any code $C$ in $\mathrm{C}$, $h(C) \leq \zeta(C)$.
Then 
$$
     n-(d+d^{\perp})+1 \leq \zeta\;.
$$
\end{conj}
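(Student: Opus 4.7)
The plan is to first recast the conjecture in workable terms. Writing $\sigma = \sigma(C) = n + 2 - d - d^{\perp}$, one has $n - (d + d^{\perp}) + 1 = \sigma - 1$. Since $\zeta$ is an arbitrary pointwise upper bound for $h$ and one can always take $\zeta = h$ itself, the conjecture is equivalent to the pointwise lower bound $h(C) \geq \sigma - 1$ for every $C \in \mathrm{C}$. Unpacking the definition of $h$, this means: for every code $C$ and every subset $I \subseteq \{0,\ldots,n\}$ of size $\sigma - 2$, there must exist another code $C'$ with the same parameters $(q, n, k, d, d^{\perp})$ such that $A_i(C') = A_i(C)$ for all $i \in I$, yet $A(C') \neq A(C)$.

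The central tool is Proposition \ref{thm: system}. Together with $A_0 = 1$ and $A_1 = \cdots = A_{d-1} = 0$, the $d^{\perp}$ equations of \eqref{eq: thm system} constrain all weight distributions of codes with parameters $(n, k, d, d^{\perp})$ to lie in a single affine subspace $W \subseteq \mathbb{Q}^{n+1}$ of dimension exactly $\sigma - 1$, as already exploited in the proof of Proposition \ref{cor: pascal}. Let $\mathcal{D}(n, k, d, d^{\perp}) \subseteq W$ be the set of distributions actually realized by codes with these parameters, and let $V = W - W$ be the associated direction subspace of dimension $\sigma - 1$. What one needs to show is that, for every $A \in \mathcal{D}$ and every $I$ of size $\sigma - 2$, the linear subspace $V_I = \{v \in V : v|_I = 0\}$ (whose dimension is at least one, purely by linear algebra) contains a non-zero vector of the form $A' - A$ for some $A' \in \mathcal{D}$.

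I would attack this by an explicit perturbation argument. Starting from a parity-check matrix $H$ of $C$, one tries to modify $H$ by replacing a small number of its columns, producing a new code $C'$ with the same parameters whose weight distribution differs from that of $C$ only in coordinates outside $I$; the counts $N_H(\nu, r)$ of Definition \ref{def: N} combined with \eqref{eq: big formula} of Proposition \ref{prop: number matrices} provide a concrete handle on how each local modification of $H$ propagates to the full distribution, and allow one to prescribe the support of the resulting change. A more conceptual alternative is algebraic-geometric: parameterize the codes of the given parameters as a constructible subset of a Grassmannian, consider the map $C \mapsto A(C)$, and argue that over a sufficiently large alphabet its image spans $W$ and, moreover, meets each $V_I$ non-trivially.

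The main obstacle is universality. Even the weaker affine-span statement $\mathrm{aff}\,\mathcal{D} = W$ is not known for all admissible $(q, n, k, d, d^{\perp})$, and for small $q$ or for extremal parameters the set $\mathcal{D}$ may be genuinely rigid; worse, deciding which parameter tuples admit a linear code at all is open in general. A realistic intermediate programme is: (i) prove the conjecture in the large-$q$ regime by a Zariski-density or random-coding argument on the Grassmannian, where the perturbation map has full rank generically; (ii) settle the MDS case ($\sigma = 1$, trivial), the NMDS case ($\sigma = 2$, witnessed by the example already displayed), and the AMDS case using the explicit shape of $W$ given by Proposition \ref{thm: formula amds}; and (iii) handle the remaining small-$q$ and highly constrained parameter tuples by ad hoc constructions, which seems the most delicate step.
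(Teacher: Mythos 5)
The statement you are trying to prove is not proved in the paper at all: it is stated as Conjecture \ref{conj: optimality} and explicitly listed in the conclusions as future work. The only optimality argument the paper actually gives is the much weaker one immediately preceding the conjecture, namely that no bound of the prescribed linear shape $\zeta_s=n-(d+d^{\perp})+s$ with $s\leq 0$ can exist, and even that is witnessed by a single pair of $[8,4,4]_4$ NMDS codes with equal parameters and different distributions. So there is no paper proof to match, and your text should be judged as a standalone attempt.

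As such, it is a research programme rather than a proof, and the gap is exactly where you say it is. Your preliminary reductions are sound: $n-(d+d^{\perp})+1=\sigma-1$, taking $\zeta=h$ reduces the conjecture to the pointwise bound $h(C)\geq\sigma-1$, and the affine solution set $W$ cut out by \eqref{eq: thm system} together with $A_0=1$, $A_1=\cdots=A_{d-1}=0$ indeed has dimension $\sigma-1$ (this is the content of Proposition \ref{cor: pascal}). But the substantive claim --- that for every admissible parameter tuple and every code $C$ the set of \emph{realized} distributions $\mathcal{D}\subseteq W$ is rich enough that no $\sigma-2$ coordinates determine a point of it --- is precisely the open content of the conjecture, and neither the column-replacement perturbation of $H$ nor the Grassmannian/Zariski-density argument is carried out: no rank computation, no density statement, and no control of how a local change to $H$ affects only coordinates outside a prescribed set $I$ is established, and controlling the support of the change of the distribution is the hard part. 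Two smaller points: your step (ii) only settles $\sigma=2$ for the single parameter set $[8,4,4]_4$ exhibited in the paper, not for all NMDS parameters as the conjecture requires (and for MDS codes $\sigma=0$, not $1$, though the case is trivial either way); and your unpacking quantifies over \emph{every} subset $I$ of size $\sigma-2$, which, under the reading of $h$ in which "any $h(C)$ elements" means "for every choice", is strictly stronger than what $h(C)\geq\sigma-1$ demands (existence of one undetermining subset would suffice) and could be false for structured parameters even if the conjecture holds. In short: correct reformulation, but the proof itself is missing, as you yourself acknowledge --- which is consistent with the paper leaving the statement open.
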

\noindent Observe that we are dropping here any requirement on the shape of $\zeta$, which might be nonlinear.

\section{MDS, near-MDS and almost-MDS codes}
\label{sec: amds}
\begin{thm}\label{thm: mds}
The weight distribution of MDS codes is given by
$$
A_w=\binom{n}{w}\sum_{j=0}^{w-d}(-1)^j\binom{w}{j}\left(q^{w-d+1-j}-1\right)
$$
for each $w$ in $\{d,\ldots,n\}$ .
\end{thm}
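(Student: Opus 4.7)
The plan is to apply Proposition \ref{thm: system} directly, exploiting two classical facts about MDS codes: first, that $d=n-k+1$, so $A_{0}=1$ and $A_{s}=0$ for $1\le s\le d-1$; second, that the dual of an MDS code is also MDS, so $d^{\perp}=k+1$ and therefore $n-d^{\perp}=n-k-1$. This means the hypothesis $\nu>n-d^{\perp}$ of Proposition \ref{thm: system} is valid for every $\nu\in\{n-k,\ldots,n\}$, i.e. for every $\nu\ge d-1$, so we may use \eqref{eq: thm system} throughout the whole range where the $A_w$'s with $w\ge d$ appear.

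With the vanishing of $A_{1},\ldots,A_{d-1}$, equation \eqref{eq: thm system} simplifies, for each $\nu\in\{d,\ldots,n\}$, to
\begin{equation*}
\sum_{s=d}^{\nu}\binom{n-s}{\nu-s}A_{s}\;=\;\binom{n}{\nu}\!\left(q^{\nu+k-n}-1\right).
\end{equation*}
This is a lower-triangular linear system in the $k$ unknowns $A_{d},\ldots,A_{n}$, with ones on the diagonal (since $\binom{n-\nu}{0}=1$), hence uniquely solvable by forward substitution. All that remains is to exhibit the closed-form solution.

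To invert the system cleanly, I would perform the change of variables $m=n-\nu$, $i=n-s$, $a_{i}=A_{n-i}$, which transforms the coefficient $\binom{n-s}{\nu-s}$ into $\binom{i}{m}$ and reduces the problem to the classical Pascal inversion
\begin{equation*}
f(m)=\sum_{i\ge m}\binom{i}{m}a_{i}\quad\Longleftrightarrow\quad a_{i}=\sum_{m\ge i}(-1)^{m-i}\binom{m}{i}f(m),
\end{equation*}
with $f(m)=\binom{n}{m}(q^{k-m}-1)$. Substituting back $w=n-i$ and reindexing the summation via $j=m-(n-w)$, and then applying the elementary identity
\begin{equation*}
\binom{n-w+j}{n-w}\binom{n}{n-w+j}=\binom{n}{w}\binom{w}{j},
\end{equation*}
yields exactly the claimed expression for $A_{w}$.

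The only real obstacle is the inversion step; once it is recognized as an instance of the standard Pascal-inverse identity, the rest is routine algebraic rearrangement. I would, if needed, verify the inversion by a direct combinatorial argument (or by invoking invertibility of Pascal submatrices as in \cite{kersey2016invertibility}, already used in the proof of Proposition \ref{cor: pascal}).
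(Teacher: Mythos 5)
Your proposal is correct and follows the same route as the paper: it specializes Proposition \ref{thm: system} using the facts that an MDS code has $A_1=\dots=A_{d-1}=0$ and that its dual is MDS (so $n-d^{\perp}=d-2$), and then exploits the triangular Pascal structure guaranteed by Proposition \ref{cor: pascal} with $\sigma=0$. The only difference is one of completeness: the paper merely notes that the system determines the distribution and illustrates the case $\nu=d$, whereas you carry out the binomial (Pascal) inversion explicitly and verify that it reproduces the closed formula, which is a sound and routine completion of the same argument.
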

\noindent This well-known formula (see e.g. \cite[Ch. 11, \S3, Theorem 6]{macwilliams1977theory} or \cite{pless1998introduction}) can be proved directly using Proposition \ref{thm: system} and Proposition \ref{cor: pascal}. Indeed, if $C$ is an MDS code, so is its dual, and the Singleton defect of an MDS code is by definition equal to $0$; it follows that $\sigma=0$, hence by Proposition \ref{cor: pascal}  the weight distribution of MDS codes can be directly obtained from Proposition \ref{thm: system} only by the knowledge of the length $n$ and minimum distance $d$ of the code. For example, in the case $\nu=d$, Equation \eqref{eq: thm system} is
$$
\binom{n}{d}A_0+\binom{n-d}{0}A_d=\binom{n}{d}q^{d+k-n}\;,
$$  
which becomes 
$$
A_d=\binom{n}{d}\left(q-1\right).
$$

Almost-MDS (AMDS) codes are defined as codes with Singleton's defect $1$, namely, almost-MDS codes are $[n,k,n-k]_q$ codes. A sub-class of AMDS codes are the so-called near-MDS (NMDS) codes, defined as AMDS codes whose dual is still an AMDS code. In this particular case the sum of the Singleton defects of a code and its dual is $\sigma=2$, hence the knowledge of a single element among $A_d,\ldots,A_n$ completely determines the entire weight distribution.
\begin{thm}\label{thm: near mds}
Let $C$ be an $[n,k,n-k]_q$ near-MDS code, i.e. a code with Singleton's defect $1$ and $\sigma=2$. \\
Let $1\leq i \leq k$. Then
$$
A_{n-k+i}=\binom{n}{k-i}\sum_{j=0}^{i-1}(-1)^j\binom{n-k+i}{j}(q^{i-j}-1)+(-1)^i\binom{k}{i}A_{n-k}\; .
$$
\end{thm}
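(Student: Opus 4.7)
The plan is to prove the formula by induction on $i$, using Proposition \ref{thm: system} applied in the NMDS setting. For an NMDS code we have $d=n-k$ and $d^{\perp}=k$, so the admissible range in Proposition \ref{thm: system} is $\nu\in\{n-k+1,\ldots,n\}$, and the only indices $s\le\nu$ with possibly nonzero $A_s$ are $s=0$ and $s\in\{n-k,\ldots,\nu\}$. Setting $\nu=n-k+i$ and reindexing $s=n-k+t$, Equation \eqref{eq: thm system} becomes
\[
\binom{n}{k-i}+\sum_{t=0}^{i}\binom{k-t}{i-t}A_{n-k+t}=\binom{n}{k-i}q^{i},
\]
which yields the triangular recursion
\[
A_{n-k+i}=\binom{n}{k-i}(q^{i}-1)-\binom{k}{i}A_{n-k}-\sum_{t=1}^{i-1}\binom{k-t}{i-t}A_{n-k+t}.
\]

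The base case $i=1$ is immediate, since the recursion reduces to $A_{n-k+1}=\binom{n}{k-1}(q-1)-kA_{n-k}$, which matches the claimed formula. For the inductive step I would assume the formula holds for every $t\in\{1,\ldots,i-1\}$ and substitute it into the recursion above, then verify separately the coefficient of $A_{n-k}$ and the part independent of $A_{n-k}$.

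For the $A_{n-k}$-coefficient the key identity is
\[
\binom{k-t}{i-t}\binom{k}{t}=\binom{k}{i}\binom{i}{t},
\]
after which the coefficient becomes $-\binom{k}{i}\sum_{t=0}^{i-1}(-1)^{t}\binom{i}{t}$, and this sum equals $(-1)^{i+1}$ by the binomial expansion of $(1-1)^{i}=0$; the resulting contribution is exactly $(-1)^{i}\binom{k}{i}A_{n-k}$, as required. For the remaining part, the identity
\[
\binom{k-t}{i-t}\binom{n}{k-t}=\binom{n}{k-i}\binom{n-k+i}{i-t}
\]
lets me factor out $\binom{n}{k-i}$, reducing the problem to a purely combinatorial identity (independent of the code) between a double sum indexed by $(t,j)$ with $1\le t\le i-1$, $0\le j\le t-1$ and the single sum $\sum_{j=0}^{i-1}(-1)^{j}\binom{n-k+i}{j}(q^{i-j}-1)$.

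The main obstacle is the bookkeeping in this last double sum. To handle it I would swap the order of summation, apply the symmetry
\[
\binom{n-k+i}{i-t}\binom{n-k+t}{j}=\binom{n-k+i}{j}\binom{n-k+i-j}{i-t},
\]
and reindex by $u=t-j$ so that the inner sums over $u$ collapse by a Vandermonde-type cancellation, leaving precisely the right-hand side of the claimed formula minus the discrepancy term $(q^{i}-1)$ already present outside the sum. This telescoping is routine but delicate, and is the only genuinely computational part of the argument.
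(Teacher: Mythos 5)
Your argument is correct, and it supplies a proof that the paper itself does not spell out: for Theorem \ref{thm: near mds} the authors only cite \cite{dodunekov1995near} and remark that the formula can be derived from Proposition \ref{thm: system}. Your starting point is exactly that suggested route (the specialization of \eqref{eq: thm system} with $d=n-k$, $d^{\perp}=k$, which gives your triangular recursion; the base case and the range of $\nu$ check out), but your execution differs from what the paper does in the analogous AMDS computation of Section \ref{sec: amds}: there the authors do not induct, they write the triangular system as $\mathcal{P}\cdot A=b$ and multiply by the explicit inverse of the truncated Pascal matrix, which for $\sigma=2$ yields, for $0\leq i\leq k-1$,
\[
A_{n-k+1+i}=\sum_{j=0}^{i}(-1)^{i-j}\binom{k-1-j}{i-j}\left[\binom{n}{n-k+1+j}\left(q^{j+1}-1\right)-\binom{k}{j+1}A_{n-k}\right],
\]
an expression of a different shape from the statement (reconciling the two requires binomial manipulations of the same kind you perform). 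So the matrix-inversion route avoids your double-sum bookkeeping but lands on an equivalent, differently indexed formula, whereas your induction produces the stated form directly. All the identities you quote are correct ($\binom{k-t}{i-t}\binom{k}{t}=\binom{k}{i}\binom{i}{t}$, $\binom{k-t}{i-t}\binom{n}{k-t}=\binom{n}{k-i}\binom{n-k+i}{i-t}$, and the symmetry for the double sum), and the final step does close, with one small imprecision of phrasing: after applying your symmetry identity, absorbing the right-hand-side terms as the $u=i-j$ boundary terms (since $\binom{n-k+i-j}{0}=1$) and swapping the summation order, it is the inner alternating sum over $j$ (equivalently over $t$), not the sum over $u$, that collapses, via $\sum_{j=0}^{w}(-1)^{j}\binom{M}{j}\binom{M-j}{w-j}=\delta(w,0)$ with $M=n-k+i$ and $w=i-u$ (the same trick you already use for the $A_{n-k}$-coefficient, an alternating-binomial rather than strictly Vandermonde cancellation); only $u=i$ survives, giving exactly the $(q^{i}-1)$ already present outside the sum. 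That is a matter of wording in the sketch, not a gap, so your proposal stands as a complete and somewhat more self-contained derivation than the paper's citation-plus-remark.
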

A first proof of this result can be found in \cite{dodunekov1995near}. We remark that, similarly to Theorem \ref{thm: mds}, it is possible to prove the formula for the distribution of NMDS codes by using Proposition \ref{thm: system}. 

\par\medskip

The weight distribution of AMDS codes depends on more than one parameter, and a general formula is not known. In the remaining part of this section we make use of Proposition \ref{thm: system} and Proposition \ref{cor: pascal} to obtain a explicit formula for AMDS codes, i.e. for $[n,k,n-k]_q$ codes whose dual are $[n,n-k,k-\sigma+2]_q$ codes, with $\sigma \ge 2$. Notice that NMDS are particular cases of AMDS codes, corresponding to $\sigma=2$.

When $d=n-k$ and $d^{\perp}=k-\sigma+2$, by writing $\nu$ as $n-k+\sigma-1+i$ with $i$ in the range $0,\ldots,k-\sigma+1$,  Equation \eqref{eq: thm system} becomes
\begin{equation}\nonumber
\sum_{s=0}^{n-k+\sigma-1+i}
\binom{n-s}{n-k+\sigma-1+i-s}A_s
 =
\binom{n}{n-k+\sigma-1+i}q^{i+\sigma-1}\;.
\end{equation}
Since we know that
\begin{equation}\nonumber
A_0=1,\qquad
A_s=0\;\mathrm{for}\;1\leq s\leq n-k-1\;,
\end{equation}
then we obtain
\begin{multline*}
\binom{n}{n-k+\sigma-1+i}+
\sum_{h=0}^{\sigma-2}\binom{k-h}{\sigma-1+i-h}A_{n-k+h}+
\\+
\sum_{s=n-k+\sigma-1}^{n-k+\sigma-1+i}
\binom{n-s}{n-k+\sigma-1+i-s}A_s
 =
\binom{n}{n-k+\sigma-1+i}q^{i+\sigma-1}\;,
\end{multline*}
hence, by denoting $s=n-k+\sigma-1+j$, we obtain the formula
\begin{multline}\label{eq: re-system}
\sum_{j=0}^{i}
\binom{k-\sigma+1-j}{i-j}A_{n-k+\sigma-1+j}=
\\
 =
\binom{n}{n-k+\sigma-1+i}\left(q^{i+\sigma-1}-1\right)
-
\sum_{h=0}^{\sigma-2}\binom{k-h}{\sigma-1+i-h}A_{n-k+h}\;.
\end{multline}
Observe that we can write the last equation in matrix form as $\mathcal{P}\cdot A=b$, thus $A=\mathcal{P}^{-1}\cdot b$, where the matrix $\mathcal{P}$ is the Pascal matrix $\left[
\binom{k-\sigma+1-j}{i-j}
\right]_{i,j=0,\ldots,k-\sigma+1}$, $A$ is the column vector $(A_{n-k+\sigma-1},\ldots, A_n)^{\mathrm{T}}$, and $b$ is the column vector of known terms on the right-hand side of equation \eqref{eq: re-system}. 
Pascal matrices and their properties are deeply studied (see e.g. \cite{brawer1992linear, yang2006explicit,kersey2016invertibility}), and in our case we have
$$
\mathcal{P}^{-1}=
\left[
(-1)^{i-j}
\binom{k-\sigma+1-j}{i-j}
\right]_{i,j=0,\ldots,k-\sigma+1}\;.
$$
By expliciting the multiplication $\mathcal{P}^{-1}\cdot b$ we obtain the formula in the following proposition.
\begin{prop}\label{thm: formula amds}
Let $C$ be an $[n,k,n-k]_q$ AMDS code, let $C^{\perp}$ be an $[n,n-k,k-\sigma+2]_q$ code, and let $\{A_0,\ldots, A_n\}$ be the weight distribution of $C$. Then, the knowledge of $\{A_{n-k}, \ldots, A_{n-k+\sigma-2}\}$ is enough to compute the entire weight distribution of $C$. In particular, for any $0\leq i \leq k-\sigma+1$, $A_{n-k+\sigma-1+i}$ is
{\small\begin{multline*}
\sum_{j=0}^{i}(-1)^{i-j}
\binom{k-\sigma+1-j}{i-j}
\left[
\binom{n}{n-k+\sigma-1+j}\left(q^{j+\sigma-1}-1\right)
-
\sum_{h=0}^{\sigma-2}\binom{k-h}{\sigma-1+j-h}A_{n-k+h}
\right]\;.
\end{multline*}}
\end{prop}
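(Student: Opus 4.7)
The plan is to apply Proposition \ref{thm: system} to every permissible value of $\nu$. Since $d^{\perp}=k-\sigma+2$, the constraint $n-d^{\perp}<\nu\leq n$ becomes $\nu\in\{n-k+\sigma-1,\ldots,n\}$, a range of size $k-\sigma+2$. Parametrising these by writing $\nu=n-k+\sigma-1+i$ with $i\in\{0,\ldots,k-\sigma+1\}$ produces exactly one equation per value of $i$.

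Next I would exploit the AMDS hypothesis $d=n-k$, which forces $A_0=1$ and $A_s=0$ for $1\leq s\leq n-k-1$. Substituting these into \eqref{eq: thm system} kills all intermediate terms and leaves a sum in which the unknowns are the $\sigma-1$ "boundary" coefficients $A_{n-k+h}$ for $h=0,\ldots,\sigma-2$, together with the $k-\sigma+2$ coefficients $A_{n-k+\sigma-1+j}$ for $j=0,\ldots,i$. Following the excerpt's convention, I would move the boundary coefficients to the right-hand side (treating them as known data, in line with the first claim of the proposition, which in turn follows from Proposition \ref{cor: pascal}) and the remaining coefficients to the left-hand side. After a reindexing $s=n-k+\sigma-1+j$, this yields precisely the linear system \eqref{eq: re-system}.

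The core step is to recognise the coefficient matrix $\mathcal{P}=\left[\binom{k-\sigma+1-j}{i-j}\right]_{i,j=0,\ldots,k-\sigma+1}$ as a (lower-triangular) Pascal matrix. Its explicit inverse $\mathcal{P}^{-1}=\left[(-1)^{i-j}\binom{k-\sigma+1-j}{i-j}\right]_{i,j}$ is classical (see \cite{kersey2016invertibility}, already cited above), so $\mathcal{P}$ is invertible and the system has a unique solution $A=\mathcal{P}^{-1}\cdot b$, where $b$ collects the code-parameter terms $\binom{n}{n-k+\sigma-1+j}(q^{j+\sigma-1}-1)$ minus the boundary contributions $\sum_{h=0}^{\sigma-2}\binom{k-h}{\sigma-1+j-h}A_{n-k+h}$. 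Reading off the $i$-th row of $\mathcal{P}^{-1}\cdot b$ then produces, term by term, the displayed formula for $A_{n-k+\sigma-1+i}$.

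The main obstacle is essentially bookkeeping rather than conceptual: one has to track the triple indexing (the outer index $i$ for the row of $\mathcal{P}^{-1}$, the summation index $j$ from the matrix product, and the inner index $h$ from the boundary terms), and one has to verify that the binomial coefficients vanish correctly whenever their arguments go out of the meaningful range, so that the sums can safely be written over the intervals stated. Once this indexing is checked, the inverse Pascal-matrix identity yields the closed form mechanically; no further combinatorial input is needed beyond Proposition \ref{thm: system} and the AMDS-specific pattern of zero coefficients.
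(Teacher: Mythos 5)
Your proposal is correct and follows essentially the same route as the paper: specializing Proposition \ref{thm: system} to $\nu=n-k+\sigma-1+i$, using the AMDS zero pattern to reduce to the system \eqref{eq: re-system}, and inverting the resulting Pascal matrix via its classical explicit inverse to read off the closed formula. No gaps; the indexing bookkeeping you flag is exactly the only remaining verification, as in the paper's own derivation.
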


\par\bigskip

\section{Comparison with known formulae}
\label{sec: comparison}
In this section we show how some known results on the weight distribution of codes can be derived as particular cases of Propositions \ref{thm: system} and \ref{cor: pascal}.

\par\medskip

\subsection{Hermitian codes}
\label{subsec: hermitian}
The Hermitian curve $\mathcal{H}$ over $\mathbb{F}_{q^2}$ is defined by the equation $x^{q+1}=y^q+y$ and has $n = q^3$ rational affine points $P_1,\ldots, P_n$, plus one point at infinity $P_{\infty}$.
The Hermitian code is defined by the general Goppa construction:
\begin{defn}
Let $m$ be a non-negative integer, $D=\sum_{i=1}^nP_i$, $Q=mP_{\infty}$, and let $L(Q)$ be the vector space of rational functions on $\mathcal{H}$ whose divisor of poles is bounded by $Q$. The Hermitian code is
$$
\mathcal{C}(q,m)=\{(f(P_1),\ldots,f(P_n))\in\left(\mathbb{F}_{q^2}\right)^n\mid f\in L(Q)\}\;.
$$
\end{defn}
We remark that for each $1\leq k\leq n-1$ there exists a (unique)  Hermitian code with dimension $k$. To ease the notation we simply denote such code with $\mathcal{C}_k$ and with $\mathcal{A}=\{\mathcal{A}_i\}_{i=0,\ldots,n}$ its weight distribution.

In \cite{pellegrini2019weight}, the authors prove that there exist linear relations among the weights of Hermitian codes. Moreover, they prove that the knowledge of $\mathcal{A}_d,\ldots, \mathcal{A}_{n-d^{\perp}}$ is enough to determine the entire weight distribution. This particular subset of $n-d-d^{\perp}+1$ elements of $\mathcal{A}$ is called the \textit{critical region}, as in the following definition. 
\begin{defn}[\cite{pellegrini2019weight}, Definition 11]
Let $q$ be a power of a prime. For any $k$ such that $1\leq k\leq n-1$, let $d=d(k)$ be the distance of the Hermitian code $\mathcal{C}_k$ with first parameter $q$ and dimension $k$, and let $d^{\perp}=d^{\perp}(k)$ the distance of $\mathcal{C}_k^{\perp}$. The critical region $R_q$ is the set of pairs $(k,w)$ such that $d\leq w\leq n-d^{\perp}$:
$$R_q:=\{(k, w)\in \mathbb{N}^2\;:\;d(k)\leq w\leq n-d^{\perp}(k)\}\;.$$
\end{defn}
The main result in \cite{pellegrini2019weight} is the following theorem:
\begin{thm}[\cite{pellegrini2019weight}, Theorem 15]\label{thm: hermitian}
Let us consider the Hermitian code $\mathcal{C}_k$. Let $d^{\perp}$ be the distance of $\mathcal{C}_k^{\perp}$. Let $\nu$ be an integer such that $n-d^{\perp}<\nu\leq n$. Then 
$$
\sum_{s=0}^\nu \binom{q^3-s}{\nu-s}\mathcal{A}_s=\binom{q^3}{\nu}q^{2(\nu+k-q^3)}\;.
$$
The knowledge of $\mathcal{A}_w$ with $d\leq w\leq n-d^{\perp}$, that is, the subset of the weight distribution that lies in the critical region, allows us to calculate the whole weight distribution of $\mathcal{C}_k$.
\end{thm}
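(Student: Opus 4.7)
The plan is to recognize this theorem as a direct specialization of Proposition \ref{thm: system} and Proposition \ref{cor: pascal} to the Hermitian setting, so that essentially no new counting argument is needed; the whole point is that the framework of the present paper already contains this statement as a corollary.

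First I would observe that a Hermitian code $\mathcal{C}_k$ is a linear code of length $n = q^3$ over the field $\mathbb{F}_{q^2}$. Applying Proposition \ref{thm: system} with this length and this field size (so that the generic ``$q$'' of that proposition must be replaced by $q^2$), and under the hypothesis $n-d^{\perp} < \nu \leq n$, the right-hand side becomes
$$
\binom{q^3}{\nu}(q^2)^{\nu+k-q^3} \;=\; \binom{q^3}{\nu}q^{2(\nu+k-q^3)},
$$
which is exactly the claimed identity. So the first half of the theorem follows immediately, with no work beyond matching notation.

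For the statement about the critical region, I would compute the total Singleton defect $\sigma$ of $\mathcal{C}_k$ and $\mathcal{C}_k^{\perp}$. The defect of $\mathcal{C}_k$ equals $n-k+1-d$, and the defect of $\mathcal{C}_k^{\perp}$ equals $k+1-d^{\perp}$, so $\sigma = n+2-d-d^{\perp}$. Proposition \ref{cor: pascal} then guarantees that the full distribution is determined by $d$ together with any $\sigma - 1 = n+1-d-d^{\perp}$ further values from $\{\mathcal{A}_d,\ldots,\mathcal{A}_n\}$. Since the critical region $\{w : d \leq w \leq n-d^{\perp}\}$ consists of exactly $n-d-d^{\perp}+1 = \sigma - 1$ integers, taking the $\mathcal{A}_w$ for $w$ in the critical region supplies precisely the needed input, and the reconstruction is furnished by solving the Pascal-matrix system \eqref{eq: pascal}.

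The main obstacle is purely bookkeeping rather than a genuine mathematical difficulty: one must be careful to identify the field size ``$q$'' of the generic framework with $q^2$ for a Hermitian code, and to verify that the size of the critical region matches the $\sigma - 1$ prescribed by Proposition \ref{cor: pascal}. No new combinatorial lemma is required, and in particular the detailed geometric arguments originally used in \cite{pellegrini2019weight} to prove Theorem \ref{thm: hermitian} are bypassed, which is the key observation of this subsection.
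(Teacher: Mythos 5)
Your proposal is correct and follows essentially the same route as the paper, which likewise treats Theorem \ref{thm: hermitian} as a direct specialization of Proposition \ref{thm: system} (with the field size $q^2$, length $n=q^3$, so that $(q^2)^{\nu+k-q^3}=q^{2(\nu+k-q^3)}$) and handles the critical-region claim via Proposition \ref{cor: pascal}, since the region contains exactly $n-d-d^{\perp}+1=\sigma-1$ weights. Your bookkeeping of the Singleton defects and the size of the critical region matches the paper's remark, so nothing further is needed.
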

Theorem \ref{thm: hermitian} can be seen as a particular case of Proposition \ref{thm: system} specialized to Hermitian codes. Moreover, the critical region can be generalized to any pattern of $n-d-d^{\perp}+1$ elements of the weight distribution, as stated in Proposition \ref{cor: pascal}, but we do not write this obvious generalization.

\subsection{Extremal doubly-even self-dual binary code}
\label{subsec: extremal}
Since the seminal work of Gleason in 1970 \cite{gleason1970weight} much effort has been put on the characterisation of extremal codes, in paticular of type II codes, namely extremal doubly-even self-dual binary code. Remarkable results were produced in 1972 by Mallows and Sloane, who  showed an explicit formula for the distribution \cite[Theorem 1]{mallows1973upper}. Another milestone is the work of Duursma on ultraspherical polynomials published in 2003, in which implicit linear relations between the coefficients of the distribution are obtained \cite[Theorem 12]{duursma2003extremal}, leading to fast methods for the distribution computation. 

Extremal type II codes are $[24m,12m,4m+4]_2$ self-dual codes (hence both $d$ and $d^{\perp}$ are equal to $m+4$). Their weight distribution has two properties:
\begin{itemize}
\item $A_i$ is zero for each $i$ not divisible by 4;
\item $A_i=A_{n-i}$, hence $A_{24m}=1$ and $A_i=A_{n-i}=0$ for each $1\leq i \leq 4m+3$.
\end{itemize}
By Proposition \ref{cor: pascal}, to completely determine the distribution we need to know $d$, $d^{\perp}$ and $16m-7$ elements in $\{A_{4m+4},\ldots, A_{24m}\}$. The number of non-zero elements in this set is $4m$ (including $A_n=1$), hence we know $16m-3$ elements (i.e. the zero elements in the set). Since $16m-3$ is larger than $16m-7$, then we know enough elements to solve the linear system in Proposition \ref{thm: system}.
\\
Let $a,b\in \mathbb{R}$. $\delta(a,b)$ is the Kronecker delta, that is $\delta(a,a)=1$ and $\delta(a,b)=0$ if $a\neq b$.
\begin{prop}\label{prop: extremal}
The weight distribution of an $[24m,12m,4m+4]_2$ extremal doubly-even self-dual binary code is completely determined by any $4m-1$ relations of the form
\begin{equation}\label{eq: our extremal}
\sum_{\ell=1}^{4m-1}\binom{20m-4\ell}{\nu-4m-4\ell}A_{4m+4\ell}=\binom{24m}{\nu}\left(2^{\nu-12m}-1\right)-\delta(24m,\nu)\;,
\end{equation}
where $20m-4<\nu\leq 24m$.
\end{prop}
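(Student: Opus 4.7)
The plan is to specialise Proposition \ref{thm: system} to the parameters of an extremal doubly-even self-dual binary code and then strip away the contributions of the coefficients that are already known (either by symmetry, by divisibility, or from the minimum distance). For such a code we have $n=24m$, $k=12m$, $q=2$ and $d^{\perp}=4m+4$, so Proposition \ref{thm: system} applies for every integer $\nu$ in the range $20m-3 \le \nu \le 24m$ and gives
\[
\sum_{s=0}^{\nu}\binom{24m-s}{\nu-s}A_s \;=\; \binom{24m}{\nu}\,2^{\nu-12m}.
\]

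Next I would identify which terms on the left-hand side are forced. By hypothesis $A_s=0$ unless $4\mid s$, and by the extremality together with self-duality $A_s=0$ for $1\le s\le 4m+3$ and hence also for $20m-3\le s\le 24m-1$ by the symmetry $A_s=A_{24m-s}$. Therefore the only nonzero summands are $A_0=1$, $A_{24m}=1$, and $A_{4m+4\ell}$ for $\ell=1,\ldots,4m-1$. Substituting $s=0$ contributes $\binom{24m}{\nu}$, substituting $s=24m$ contributes $\binom{0}{\nu-24m}=\delta(24m,\nu)$, and the remaining terms are exactly those in the sum of \eqref{eq: our extremal}. Moving the two known contributions to the right-hand side and factoring $\binom{24m}{\nu}$ yields precisely equation \eqref{eq: our extremal}.

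It remains to argue that any $4m-1$ of these equations (as $\nu$ runs over $\{20m-3,\ldots,24m\}$, a set of size $4m+4$) suffice to recover the $4m-1$ unknowns $A_{4m+4},A_{4m+8},\ldots,A_{20m-4}$. This follows immediately from Proposition \ref{cor: pascal}: the coefficient matrix $\bigl[\binom{20m-4\ell}{\nu-4m-4\ell}\bigr]_{\nu,\ell}$ is a submatrix (up to column re-indexing) of a truncated Pascal matrix, all of whose maximal minors are nonzero by the result of Kersey invoked earlier in the paper. Hence any $4m-1$ rows form an invertible square subsystem, uniquely determining the unknown weights.

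The whole argument is essentially bookkeeping: the only delicate point is making sure the boundary contribution from $A_{24m}=1$ is correctly captured by $\delta(24m,\nu)$ (coming from $\binom{0}{\nu-24m}$, which vanishes except at $\nu=24m$), and that the count of nonzero weights agrees with the count of unknowns. No new idea beyond Propositions \ref{thm: system} and \ref{cor: pascal} is needed.
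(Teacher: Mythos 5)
Your proposal is correct and follows essentially the same route as the paper's proof: specialize Proposition \ref{thm: system} to $n=24m$, $k=12m$, $d=d^{\perp}=4m+4$, use the divisibility-by-$4$ and symmetry constraints to reduce the nonzero terms to $A_0$, $A_{24m}$ and the $4m-1$ unknowns $A_{4m+4\ell}$, obtaining \eqref{eq: our extremal} with the $\delta(24m,\nu)$ term, and then invoke the Pascal-matrix/Kersey argument for unique solvability. The only caveat is that your justification of the ``any $4m-1$ equations suffice'' step via \emph{maximal} minors is not literally what is needed (the relevant $(4m-1)\times(4m-1)$ subdeterminants are non-maximal minors of $\mathcal{P}_{4m+4,\,24m}$ with an arbitrary choice of rows), but the paper's own proof is equally terse here, resting on the same one-line appeal to Proposition \ref{cor: pascal}.
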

\begin{proof}
We specialize the linear system in Proposition \ref{thm: system} to our case, i.e. $n=24m$, $k=12m$, $d=d^{\perp}=4m+4$, $ A_i=A_{24m-i}$, and $A_i=0$ for each $i\neq 0\mod4$. We remark that the symmetry of the weight distribution implies $A_{24m}=1$ and $A_{24m-i}=0$ for each $1\leq i\leq 4m-3$. The term $\delta(24m,\nu)$ appears since $A_{24m}=1$. 
\\
The linear system admits a unique solution by Proposition \ref{cor: pascal}, which also implies that any subset of $4m-1$ equations allows to determine the entire distribution. 
\end{proof}
We remark that to speed-up the computation it is possible to add to the equations in Proposition \ref{prop: extremal} the $2m-1$ symmetry equations $A_{4m+4\ell}=A_{20m-4\ell}$, with $1\leq\ell\leq 2m-1$. Even more, it is possible to consider a subset of $2m$ equations from Proposition \ref{prop: extremal} and to use all symmetry equations to obtain a full-rank system with $4m-1$ unknowns and an equal number of equations, whose unique solution is therefore the weight distribution.  It is however important to choose wisely which equations \eqref{eq: our extremal} are to be kept. For example, in the case $m=1$ we choose $2m=2$ equations from \eqref{eq: our extremal} and consider the symmetry equation $A_8-A_{16}=0$. If we choose the two equations from \eqref{eq: our extremal} corresponding to $\nu=22$ and $\nu=24$ we obtain the system
\begin{equation}\label{eq: exmp extr 1}
\left\{
\begin{array}{l}
\binom{16}{14}A_8+\binom{12}{10}A_{12}+\binom{8}{6}A_{16}=\binom{24}{22}\left(2^{10}-1\right)\\
\binom{16}{16}A_8+\binom{12}{12}A_{12}+\binom{8}{8}A_{16}=\binom{24}{24}\left(2^{12}-1\right)-1\\
A_8-A_{16}=0\;,
\end{array}
\right.
\end{equation}
while, if we use $\nu=23$ and $\nu=24$ we obtain
\begin{equation}\label{eq: exmp extr 2}
\left\{
\begin{array}{l}
\binom{16}{15}A_8+\binom{12}{11}A_{12}+\binom{8}{7}A_{16}=\binom{24}{23}\left(2^{11}-1\right)\\
\binom{16}{16}A_8+\binom{12}{12}A_{12}+\binom{8}{8}A_{16}=\binom{24}{24}\left(2^{12}-1\right)-1\\
A_8-A_{16}=0\;.
\end{array}
\right.
\end{equation}
In \eqref{eq: exmp extr 1} the three equations are linearly independent, hence the system admits as unique solution the weight distribution of the $[24,12,8]_2$ extremal doubly-even self-dual code.\\
Instead, in \eqref{eq: exmp extr 2} the equations are linearly dependent, hence the weight distribution is not completely determined.

\section{Pless Equations}
\label{sec: pless}
In this section we relate our work with a particular formulation of the Pless equations, which is enunciated in the following theorem.
\begin{thm}\cite[Theorem 7.2.3]{huffman2010fundamentals}\label{thm: pless1}
Let $C$ be a linear code of length $n$, $C^{\perp}$ its dual code, and let $\{A_i\}$ and $\{B_j\}$ be the weight distributions of $C$ and $C^{\perp}$. For any $0\leq \nu\leq n$ it holds
\begin{equation}\label{eq: pless1}
\sum_{i=\nu}^n\binom{i}{\nu}A_i=q^{k-\nu}\sum_{j=0}^{\nu}(-1)^j\binom{n-j}{n-\nu}(q-1)^{\nu-j}B_j \;.
\end{equation}
\end{thm}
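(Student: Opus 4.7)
The plan is to derive \eqref{eq: pless1} from the classical MacWilliams transform
\[W_C(x,y) \;=\; q^{\,k-n}\, W_{C^\perp}\bigl(x+(q-1)y,\; x-y\bigr)\]
by applying $\nu$-fold differentiation in $y$ and evaluating at $x=y=1$. First, expanding the bivariate weight enumerator $W_C(x,y) = \sum_i A_i x^{n-i}y^i$ directly shows that $\partial_y^{\nu} W_C|_{x=y=1} = \nu!\sum_{i=\nu}^{n}\binom{i}{\nu} A_i$, which is the LHS of \eqref{eq: pless1} up to the factor $\nu!$.

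For the RHS, I will set $g(u,v) := W_{C^\perp}(u,v) = \sum_j B_j u^{n-j} v^j$ with $u = x+(q-1)y$ and $v = x-y$. Since both $u$ and $v$ are affine in $y$, the chain rule reduces $\partial_y$ to the constant-coefficient operator $(q-1)\partial_u - \partial_v$ acting on $g$, so the binomial theorem gives
\[\partial_y^\nu g \;=\; \sum_{j=0}^\nu \binom{\nu}{j} (q-1)^{\nu-j}(-1)^j\, \partial_u^{\,\nu-j}\partial_v^{\,j} g.\]
Applying $\partial_u^{\nu-j}\partial_v^j$ monomial-by-monomial brings down falling factorials from the $u^{n-m}v^m$ terms. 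The crucial observation is then that upon evaluating at $(x,y)=(1,1)$, which corresponds to $u=q$ and $v=0$, every monomial with $v$-degree strictly greater than $j$ vanishes, so only the single monomial $B_j u^{n-j} v^j$ contributes to the $j$-th summand.

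Once the surviving contributions are collected and the common factor $q^{k-n}\cdot q^{n-\nu} = q^{k-\nu}$ is pulled out, one obtains an equation of the shape
\[\nu!\sum_{i=\nu}^{n}\binom{i}{\nu} A_i \;=\; q^{k-\nu}\sum_{j=0}^{\nu} \binom{\nu}{j}(q-1)^{\nu-j}(-1)^j\, j!\, B_j\, \frac{(n-j)!}{(n-\nu)!},\]
and the combinatorial factor simplifies via $\binom{\nu}{j}j!/\nu! = 1/(\nu-j)!$ to $(n-j)!/[(\nu-j)!(n-\nu)!] = \binom{n-j}{n-\nu}$, yielding \eqref{eq: pless1} after dividing through by $\nu!$.

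I expect the only nontrivial step to be the bookkeeping inside the Leibniz expansion: one has to apply the derivatives \emph{before} setting $v=0$ (otherwise all terms with $j>0$ would incorrectly disappear), and carefully track the falling factorials so that they collapse into the single binomial $\binom{n-j}{n-\nu}$. Everything else is substitution and elementary manipulation of binomial coefficients.
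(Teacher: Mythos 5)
Your derivation is correct. Note that the paper gives no proof of this statement at all --- it is quoted verbatim from Huffman--Pless, so there is no internal argument to compare against; your route (applying $\partial_y^{\nu}$ to the MacWilliams identity $W_C(x,y)=q^{k-n}W_{C^\perp}\bigl(x+(q-1)y,\,x-y\bigr)$ and evaluating at $(x,y)=(1,1)$, i.e.\ $(u,v)=(q,0)$) is the standard way to obtain the Pless power moments, and every step checks out: the operator identity $\partial_y=(q-1)\partial_u-\partial_v$ is valid because the substitution is affine with constant coefficients, in the $j$-th Leibniz term only the monomial $B_j u^{n-j}v^j$ survives evaluation at $v=0$ (lower $v$-degrees are annihilated by $\partial_v^{\,j}$, higher ones retain a factor of $v$), and the factor $\binom{\nu}{j}\,j!/\nu!\cdot(n-j)!/(n-\nu)!$ indeed collapses to $\binom{n-j}{n-\nu}$, while $q^{k-n}q^{\,n-\nu}=q^{\,k-\nu}$. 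The sanity checks $\nu=0$ (giving $\sum_i A_i=q^k$) and $\nu=n$ confirm the bookkeeping.
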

If we consider the case $\nu<d^{\perp}$ then each $B_j$ on the right-hand side of Equation \eqref{eq: pless} is equal to zero, with the exception of $B_0$. We obtain the following theorem.
\begin{thm}\label{thm: pless}
For any $\nu< d^{\perp}$ it holds
\begin{equation}\label{eq: pless}
\sum_{i=\nu}^n\binom{i}{\nu}A_i=q^{k-\nu}\binom{n}{\nu}(q-1)^{\nu} \;.
\end{equation}
\end{thm}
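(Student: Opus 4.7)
The plan is to derive Theorem \ref{thm: pless} as an immediate specialization of the general Pless identity in Theorem \ref{thm: pless1}. The whole argument is driven by the observation that the dual weight distribution $\{B_j\}$ has a large initial block of zeros, which collapses the right-hand side of \eqref{eq: pless1} to a single term.

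First I would write down Pless's equation \eqref{eq: pless1} for the given index $\nu$. Then I would invoke the definition of $d^{\perp}$ as the minimum distance of $C^{\perp}$: by definition, $B_j = 0$ for every $1 \leq j < d^{\perp}$, while $B_0 = 1$ since the zero vector is the unique codeword of weight $0$ in $C^{\perp}$. Under the hypothesis $\nu < d^{\perp}$, the index $j$ in the sum on the right-hand side of \eqref{eq: pless1} ranges over $0 \leq j \leq \nu < d^{\perp}$, so every summand with $j \geq 1$ vanishes and only the $j=0$ term survives.

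Next I would evaluate that surviving term: at $j = 0$ it reads
$$
(-1)^0 \binom{n-0}{n-\nu}(q-1)^{\nu-0} B_0 = \binom{n}{n-\nu}(q-1)^{\nu},
$$
and using the symmetry $\binom{n}{n-\nu} = \binom{n}{\nu}$ together with the prefactor $q^{k-\nu}$ yields exactly the right-hand side of \eqref{eq: pless}. Substituting back into \eqref{eq: pless1} gives the claimed identity.

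There is essentially no obstacle here beyond correctly identifying the range of vanishing $B_j$'s and performing the binomial symmetry; the depth of the result lies entirely in Theorem \ref{thm: pless1}, and Theorem \ref{thm: pless} is its clean restriction to indices below $d^{\perp}$. The only small care needed is to verify that the index $j = \nu$ is indeed covered by the vanishing, which is guaranteed by the strict inequality $\nu < d^{\perp}$.
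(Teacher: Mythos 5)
Your proposal is correct and coincides with the paper's own derivation: the paper obtains Theorem \ref{thm: pless} precisely by specializing Theorem \ref{thm: pless1} to $\nu<d^{\perp}$, where $B_j=0$ for $1\leq j<d^{\perp}$ and $B_0=1$ leave only the $j=0$ term, which equals $\binom{n}{n-\nu}(q-1)^{\nu}=\binom{n}{\nu}(q-1)^{\nu}$. Nothing is missing.
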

Next corollary shows that, in a loose sense, linear system \eqref{eq: thm system} and linear system \eqref{eq: pless} are equivalent, at least when enough terms of the weight distribution are known.
\begin{cor}\label{cor: pless equivalent}
Let $\sigma$ be the sum of the Singleton defects of $C$ and $C^{\perp}$. 
The knowledge of $\sigma+d-1$ elements of the weight distribution $\{A_0,\ldots,A_n\}$ is enough to compute the full weight distribution of $C$ and $C^{\perp}$.
This computation can be achieved by a direct application of \eqref{eq: pless}.
\\
In particular, the knowledge of $d$ and of any $\sigma-1$ elements of $\{A_d,\ldots,A_n\}$ is enough to compute the entire weight distribution of $C$ and $C^{\perp}$.
\end{cor}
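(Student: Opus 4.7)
The plan is to mirror the argument in Proposition \ref{cor: pascal}, replacing the Pascal-matrix-based system \eqref{eq: thm system} with the Pless-type system \eqref{eq: pless}. First I would specialize Theorem \ref{thm: pless} to every value of $\nu$ in the range $\{0,1,\ldots,d^{\perp}-1\}$, obtaining a linear system $\mathcal{Q}\cdot A(C)=c$, where $\mathcal{Q}$ is the $d^{\perp}\times(n+1)$ matrix with entries $\mathcal{Q}_{\nu,i}=\binom{i}{\nu}$ (with $0\leq \nu\leq d^{\perp}-1$ and $0\leq i\leq n$) and $c$ is the column vector whose $\nu$-th entry is $q^{k-\nu}\binom{n}{\nu}(q-1)^{\nu}$. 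This gives $d^{\perp}$ equations in the $n+1$ unknowns $A_0,\ldots,A_n$.

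The key step is then to show that every $d^{\perp}\times d^{\perp}$ minor of $\mathcal{Q}$ is non-zero, which is the analog of the Pascal-matrix invertibility invoked in the proof of Proposition \ref{cor: pascal}. I would argue as follows: fix columns $i_1<i_2<\cdots<i_{d^{\perp}}$ in $\{0,\ldots,n\}$ and observe that the polynomial $\binom{x}{\nu}$ has degree exactly $\nu$ with non-zero leading coefficient $1/\nu!$. Hence $\{\binom{x}{\nu}\}_{\nu=0}^{d^{\perp}-1}$ is a basis for the space of polynomials of degree less than $d^{\perp}$, and the change-of-basis matrix to $\{x^{\nu}\}_{\nu=0}^{d^{\perp}-1}$ is unipotent lower-triangular, in particular invertible. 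Applying the corresponding row-transformation to $[\binom{i_j}{\nu}]_{\nu,j}$ produces the Vandermonde matrix $[i_j^{\nu}]_{\nu,j}$, whose determinant $\prod_{1\leq j<j'\leq d^{\perp}}(i_{j'}-i_j)$ is non-zero since the $i_j$ are distinct.

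With non-singularity in hand, the counting closes out routinely. Writing $\sigma=(n-k+1-d)+(k+1-d^{\perp})=n+2-d-d^{\perp}$, one gets $\sigma+d-1=n+1-d^{\perp}$, so fixing any $\sigma+d-1$ of the weights leaves exactly $d^{\perp}$ unknowns coupled by a square non-singular subsystem, which determines them uniquely. Knowledge of $d$ alone supplies $A_0=1$ and $A_1=\cdots=A_{d-1}=0$ at no cost, so any further $\sigma-1$ values taken from $\{A_d,\ldots,A_n\}$ will suffice. Once $A(C)$ is recovered, $A(C^{\perp})$ follows from the MacWilliams identity, itself a consequence of Theorem \ref{thm: pless1} applied to the dual code.

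I expect the main obstacle to be precisely the all-minors-non-zero statement for $\mathcal{Q}$, since this is what turns Pless's individual identities into a genuinely solvable system; the Vandermonde reduction sketched above is the natural way to handle it, and it makes the parallel with Proposition \ref{cor: pascal} transparent, since both proofs ultimately reduce to the same polynomial-basis argument applied to a binomial coefficient matrix.
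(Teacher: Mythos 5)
Your proof is correct and follows essentially the same route as the paper: both set up the $d^{\perp}$ equations of Theorem \ref{thm: pless} (for $\nu=0,\ldots,d^{\perp}-1$) as a linear system in $A_0,\ldots,A_n$ and rest on the key fact that every $d^{\perp}\times d^{\perp}$ minor of the matrix $\bigl[\binom{i}{\nu}\bigr]$ is non-zero, the paper obtaining this by identifying it as a (version of a) truncated Pascal matrix and citing \cite{kersey2016invertibility}, while you prove it directly via the polynomial/Vandermonde reduction. The only quibble is cosmetic: the change of basis from $\{\binom{x}{\nu}\}_{\nu}$ to $\{x^{\nu}\}_{\nu}$ is triangular with diagonal entries $1/\nu!$, hence invertible but not unipotent; this does not affect the argument.
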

\begin{proof}
As in the proof of Proposition \ref{cor: pascal}, the matrix associated to the linear system in Theorem \ref{thm: pless} is a version of a truncated Pascal matrix $\mathcal{Q}_{d^{\perp},n}$, hence all $d^{\perp}\times d^{\perp}$ minors are non-zero (see \cite{kersey2016invertibility}). Therefore, the knowledge of $n-d^{\perp}+1$ elements of $A(C)$ is enough to compute the entire weight distribution.
The linear system in Equation \eqref{eq: pless} has the same number of equations and unknowns as the system in Equation \eqref{eq: thm system}, and both determine the entire weight distribution, provided that $n-d^{\perp}+1$ values among $A_0,\ldots, A_n$ are known.
\end{proof}
Note that from the proof of Corollary  \ref{cor: pless equivalent} it follows that the knowledge of any subset of $n-d^{\perp}+1$ elements of the distribution is enough to retrieve the entire distribution (for example by using Theorem \ref{thm: pless}), despite in many previous results it was believed that the known weights had to be consecutive (see \cite[Theorem 7.3.1]{huffman2010fundamentals}). 
\\
Our proof is based on the uniqueness of the weight distribution, hence the two linear systems are equivalent provided the existence of the investigated code $C$. 
In some cases, by comparing the results obtained by using both Proposition \ref{thm: system} and Corollary \ref{cor: pless equivalent} it could be possible to prove the non-existence of $C$ (i.e. the weight distributions obtained from \eqref{eq: thm system} and \eqref{eq: pless} may be different). 
However, computational evidence suggests that the equations in \eqref{eq: thm system} may be linearly dependent on equations  \eqref{eq: pless} and vice-versa. At the present time, it is therefore unclear whether the two statements can be directly derived one from the other. We finally remark that, due to Corollary \ref{cor: pless equivalent}, similar results to those in Sections \ref{sec: amds}, \ref{subsec: hermitian} and \ref{subsec: extremal} can be obtained by using the Pless equations instead of Proposition \ref{thm: system}.
\begin{rem}
Although \eqref{eq: thm system} and \eqref{eq: pless} are likely to be equivalent, Proposition \ref{prop: number matrices} and Theorem \ref{thm: pless1} are likely to be inequivalent.
In particular, Proposition \ref{prop: number matrices} gives constraints on the weight distribution of a code starting from the submatrices of $H$ with a given rank, rather than from the weights of its dual. Therefore, we think that for codes with highly-structured parity-check matrices, such as evaluation codes over curves, we can deduce directly some significant information on these submatrices and henceforth on their weight distribution.
\end{rem}


\section{Conclusions}
\label{sec: conclusions}
By studying some properties of the minors of parity-check matrices we prove a new set of linear relations between the weights of linear codes. Our main results, Propositions \ref{thm: system} and  \ref{cor: pascal}, imply that the weight distribution of a linear code is completely determined by a number of parameters bounded by the sum of the Singleton defects of the code itself and of its dual.

Proposition \ref{cor: pascal}, together with the simple structure of our linear system (which relies on Pascal matrices), allows a straightforward reformulation of known results, such as formulae for the particular case of Hermitian codes, extremal codes, MDS and near-MDS codes. We are also able to determine for the first time explicit formulae for the weight distribution of any almost-MDS code, where the number of formula parameters depends  only on the dual distance of the code. 

These results open new directions for applications to several families of codes, as hinted by the analysis on Hermitian codes, AMDS codes and extremal codes. Regarding the latter, we aim at extending our formulae to related classes of codes, such as near-extremal formally self-dual codes \cite{kim2007note, han2009nonexistence}, as well as to codes derived from these classes (e.g. punctured or shortened extremal codes). Another direction is to investigate codes having special structure in their parity-check
matrices. Moreover, we are interested in proving Conjecture \ref{conj: optimality}, where we state our belief that Proposition \ref{cor: pascal} is optimal (for a general linear code the weight distribution depends exactly on $n-d-d^{\perp}+1$ parameters).

\section*{Acknowledgements}
This project has been carried on within the EU-ESF activities, call "PON Ricerca e Innovazione
2014-2020", project “Distributed Ledgers for Secure Open Communities”.
\\
Preliminary results of this paper were presented on a talk given at Algebra for Cryptography 2019 in L'Aquila \cite{meneghetti2019A4C}.
\\
The authors would like to thank Jon-Lark Kim for fruitful discussions and comments.

\bibliographystyle{amsalpha}
\bibliography{Refs}
\end{document}